    \definecolor{citegreen}{rgb}{0,0.37,0.15}
    \definecolor{linkblue}{rgb}{0,0.4,1}
    \definecolor{citebordergreen}{rgb}{0,0.9,0.37}
      \theoremstyle{definition}
      \newtheorem{Def}{Definition}[section]%
      \newtheorem*{Def*}{Definition}%
      \theoremstyle{plain}
      \newtheorem{Prop}[Def]{Proposition}
      \newtheorem{Lemma}[Def]{Lemma}
      \newtheorem{Thm}[Def]{Theorem}
      \newtheorem{Conj}[Def]{Conjecture}
      \newtheorem*{Prop*}{Proposition}
      \newtheorem*{Thm*}{Theorem}%
      \newtheorem*{Cor*}{Corollary}%
      \theoremstyle{remark}
      \newtheorem{Rem}[Def]{Remark}%
      \newtheorem{Qu*}{Question}%
    \newcommand{\mc}{\mathcal}
    \newcommand{\mb}{\mathbb}
    \newcommand{\eps}{\varepsilon}
\DeclarePairedDelimiter\norm{\lVert}{\rVert}
    \DeclarePairedDelimiter\abs{\lvert}{\rvert}
    \DeclareMathOperator{\ev}{ev}
    \DeclareMathOperator{\PSL}{PSL}
    \DeclareMathOperator{\UU}{U}
    \DeclareMathOperator{\RUCB}{RUCB}
    \tikzset{
                     base/.style = { circle, draw },
                   filled/.style = { base, fill = black!50 },
      every arrow subpath/.style = { ->, draw, thick }
    }
    \author{Vadim Alekseev}
    \address{V.~Alekseev, TU Dresden, 01062 Dresden, Germany}
    \email{vadim.alekseev@tu-dresden.de}
    \author{Max Schmidt}
    \address{M.~Schmidt, TU Dresden, 01062 Dresden, Germany}
    \email{max.schmidt1@tu-dresden.de}
    \author{Andreas Thom}
    \address{A.~Thom, TU Dresden, 01062 Dresden, Germany}
    \email{andreas.thom@tu-dresden.de}
    \title{Amenability for unitary groups of $C^*$-algebras}
    \dedicatory{Dedicated to the memory of Eberhard Kirchberg}
    \subjclass[2020]{46L99, 43A07}
\begin{document}
    \maketitle
    
    \begin{abstract} In this note we state a conjecture that characterizes unital $C^*$-algebras for which the unitary group is amenable as a topological group in the norm topology. We prove the conjecture for simple, separable, stably finite, unital, $\mathcal Z$-stable, UCT $C^*$-algebras with torsion-free $K_0$ using the progress on the Elliott classification program for nuclear $C^*$-algebras as well as Pestov's study of amenability of gauge groups. Based on work of Kirchberg, we provide a counterexample to a question of Ng, who proposed a different characterization in earlier work.
    \end{abstract}

    \section{Introduction}
    
    Amenability has been a central theme in the study of $C^*$-algebras and von Neumann algebras ever since the seminal work of Murray and von Neumann that got the entire subject started. Amenability of discrete groups has been important already in von Neumann's work on the Hausdorff paradox and has continued to be a key notion in geometric and combinatorial group theory. Amenability for topological groups beyond the realm of locally compact groups is a topic which is technically more subtle and various characterizations diverge in this setting. A topological group $G$ is called amenable if the algebra of bounded right-uniformly continuous functions admits a left invariant mean. Similarly, following Pestov \cite{MR4206535}, a topological group is called skew-amenable if the algebra of bounded right-uniformly continuous functions admits a right invariant mean. 
    
    The unitary group $\UU(A)$ of a $C^*$-algebra $A$ comes with two natural topologies, the weak topology and the norm topology. It is  natural to study the question for which $C^*$-algebras  the unitary group is amenable or skew-amenable in these topologies. Note that the norm topology is {\rm SIN}, i.e. there are small conjugation invariant neighborhoods of the neutral element, and thus amenability and skew-amenability of ${\rm U}(A)$ are equivalent with respect to this topology.
    
    A central notion in the study of $C^*$-algebras is nuclearity, a certain finite-dimensional approximation property, which resembles features of amenability in the context of operator algebras. 
    Various characterizations of nuclearity are known, including the work of Paterson~\cite{MR1076577}, showing that a $C^*$-algebra is nuclear if and only if its unitary group is amenable in the weak topology. To the best of our knowledge, no conclusive study has been carried out to characterize amenability of the unitary group of a $C^*$-algebra in the norm topology, however first results were obtained in the work of Ng \cite{MR2269417}.
    
    Seminal work of Haagerup and Connes \cite{MR0493383, MR723220} resulted in the equivalence of nuclearity of a $C^*$-algebra and amenability of it as a Banach algebra. However, a second and stronger notion comes up naturally: Ozawa \cite{MR3156986} studied {\it symmetric amenability} and proved that it is equivalent to nuclearity plus the {\it quotient tracial state property} (QTS property). A $C^*$-algebra is said to have the QTS property if all its non-trivial quotients admit a non-trivial trace.  We want to put forward the following conjecture:
    
    \begin{Conj} \label{conj1}
    The following conditions are equivalent for a unital, separable $C^*$-algebra $A$.
    \begin{enumerate}
        \item[$(i)$] $A$ is nuclear and has the QTS property,
        \item[$(ii)$] $A$ is symmetrically amenable,
        \item[$(iii)$] $A$ is strongly amenable,
        \item[$(iv)$] ${\rm U}(A)$ is amenable in the norm topology,
        \item[$(v)$] ${\rm U}(A)$ is skew-amenable in the weak topology.
    \end{enumerate}
    \end{Conj}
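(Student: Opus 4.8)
The plan is to prove the five conditions equivalent by running the cycle $(i)\Rightarrow(ii)\Rightarrow(iii)\Rightarrow(iv)\Rightarrow(v)\Rightarrow(i)$, splitting it into the three \emph{cohomological} conditions $(i)$--$(iii)$, which I would handle with existing Banach-algebra theory, and the two \emph{topological-group} conditions $(iv)$--$(v)$, which I would connect to the algebra by averaging in one direction and by trace extraction in the other. The equivalence $(i)\Leftrightarrow(ii)$ is precisely Ozawa's characterization of symmetric amenability, so it is available for free. To incorporate $(iii)$ I would use that all three notions collapse to nuclearity together with the QTS property: strong amenability implies amenability and hence nuclearity by Connes--Haagerup, while the invariant functional built into strong amenability, pushed through a quotient and read against the conjugation action of the unitary group, is conjugation invariant and so yields a tracial state on every non-trivial quotient, giving QTS; conversely a symmetric diagonal as produced by Ozawa in the nuclear QTS case in particular witnesses strong amenability. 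This settles $(i)\Leftrightarrow(ii)\Leftrightarrow(iii)$.

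The first genuine bridge is $(iii)\Rightarrow(iv)$. The idea is that strong amenability already supplies the averaging device needed to build a left-invariant mean on $\RUCB(\UU(A))$ in the norm topology. I would let $\UU(A)$ act on $\RUCB(\UU(A))$ by right translation, note that because the norm topology is {\rm SIN} this action is by norm-continuous isometries and that the left and right uniformities coincide, and then transport the invariant functional coming from the strong-amenability diagonal to a mean on $\RUCB(\UU(A))$. Pestov's analysis of gauge groups furnishes the template for why such a diagonal integrates norm-uniformly-continuous functions to an invariant value, and the {\rm SIN} property guarantees that the resulting mean is simultaneously left and right invariant, which is what amenability in the norm topology requires.

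The implication $(iv)\Rightarrow(v)$ is soft. Since the weak topology on $\UU(A)$ is coarser than the norm topology, every weakly right-uniformly-continuous function is norm right-uniformly-continuous, so the space of bounded functions appearing in $(v)$ embeds into $\RUCB(\UU(A))$ for the norm topology. A right-invariant mean on the larger space restricts, with invariance preserved, to a right-invariant mean on the smaller one; and because the norm topology is {\rm SIN}, amenability in the norm topology coincides with skew-amenability in the norm topology. Restricting that skew-invariant mean to the weak-topology function space delivers $(v)$.

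The crux, and what I expect to be the main obstacle, is $(v)\Rightarrow(i)$: recovering both nuclearity and the QTS property from skew-amenability of $\UU(A)$ in the weak topology. Nuclearity should follow from a Paterson-type argument, turning a weak right-invariant mean into a hypertrace, hence into the completely positive approximation property; the delicate point is that Paterson's equivalence is stated for two-sided amenability, so one must check that the one-sided (skew) mean still produces the finite-dimensional approximations rather than merely a one-sided invariant object. The QTS property is where skew-invariance does the essential work: averaging a state of a non-trivial quotient $A/I$ against the conjugation action $u\mapsto u(\,\cdot\,)u^{*}$ using the right-invariant mean should produce a conjugation-invariant state, that is, a trace, on $A/I$. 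Making both extractions rigorous in full generality appears to require structural input that is at present only available through the Elliott classification program, which is exactly why the theorem is established solely for simple, separable, stably finite, $\mathcal{Z}$-stable, UCT $C^{*}$-algebras with torsionfree $K_0$: in that class $(i)$ can be verified directly, $\UU(A)$ is accessible through Pestov's gauge-group results, and the closing implication becomes provable. Removing the classification hypotheses and closing the loop for arbitrary unital separable $C^*$-algebras is the remaining difficulty.
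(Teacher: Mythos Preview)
The statement is a \emph{conjecture}, and the paper does not claim to prove it. What the paper actually establishes is the chain $(i)\Leftrightarrow(ii)\Leftarrow(iii)\Leftarrow(iv)\Rightarrow(v)$: Ozawa's equivalence $(i)\Leftrightarrow(ii)$, the trivial $(iii)\Rightarrow(ii)$, the new implication $(iv)\Rightarrow(iii)$ via F\o lner sets, and the soft $(iv)\Rightarrow(v)$. For the restricted class in Theorem~1.2 the paper does not close your cycle either; it verifies $(iv)$ \emph{directly} by representing such algebras as inductive limits of one-dimensional noncommutative CW-complexes and invoking Pestov together with a cocompact-subgroup lemma.

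Your proposed cycle contains three genuine gaps. First, the step $(i)\Rightarrow(iii)$: you assert that Ozawa's symmetric approximate diagonal ``in particular witnesses strong amenability'', but strong amenability requires the diagonal to lie in $\conv\{u^*\otimes u : u\in\UU(A)\}$, whereas Ozawa's lies only in $\{\sum x_i^*\otimes x_i : \sum\|x_i\|^2\le 1\}$. Whether symmetric amenability implies strong amenability is explicitly flagged in the paper as a long-standing open problem. Second, your bridge $(iii)\Rightarrow(iv)$ is hand-waving: a strong-amenability diagonal is an approximate object in $A\otimes_{\mathbb C} A$, not a mean on $\RUCB(\UU(A))$, and Pestov's work does not supply a mechanism for turning the former into the latter. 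The paper in fact proves the \emph{opposite} arrow $(iv)\Rightarrow(iii)$, by building approximate diagonals out of F\o lner sets in $\UU(A)$. Third, for $(v)\Rightarrow(i)$ you correctly identify the difficulty with Paterson's argument---it needs a left-invariant mean, not a right-invariant one---but then do not resolve it; whether every skew-amenable group is amenable is open, and the paper points out that the known non-nuclear hypertracial algebras are candidate counterexamples. Your closing remark that classification ``makes the closing implication provable'' misreads the logic: in the classifiable range one already knows $(i)$ holds, so there is nothing to close.
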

    
    As mentioned above, it has been proven by Ozawa \cite{MR3156986} that the first two items above are equivalent. The implication $(ii) \Rightarrow (iii)$ is a long-standing conjecture, see \cite{MR320764}. The implication from $(iv)$ to $(iii)$ will be proved in this article using results on F\o lner sets \cite{MR3809992}, see Theorem \ref{ivimpliesiii}. Finally, $(iv)\Rightarrow{}(v)$ follows since the norm topology is SIN (thus, as mentioned before, amenability and skew-amenability in norm are equivalent) and skew-amenability is preserved if one weakens the topology. Thus, the state of the art can be summarized as follows:
    $$(i) \Leftrightarrow (ii) \Leftarrow (iii) \Leftarrow (iv) \Rightarrow (v).$$
    
    Moreover, let us note that the conjecture holds if $A$ does not have the QTS property, as all five conditions are known to imply it.
    
    In general, it seems surprisingly hard to show that ${\rm U}(A)$ is amenable in the norm topology unless $A$ is finite-dimensional, in which case ${\rm U}(A)$ is compact and hence amenable. In this case, amenability is easily seen to be inherited by inductive limits, so $(iv)$ holds as a consequence for AF-algebras.
    
    Using ideas from stochastic analysis, Pestov showed that the unitary group of $M_n(C[0,1])$ is amenable in the norm topology, see \cite{MR4206535} and the references therein. The main result in the present article is a proof of Conjecture \ref{conj1} for a specific class of simple $C^*$-algebras that arises as inductive limits on one-dimensional noncommutative CW-complexes, using recent progress in the Elliott classification program and work of Pestov mentioned above. These algebras are natural examples of nuclear algebras with the QTS property.
    
    \begin{Thm} \label{main}
    Let $A$ be an inductive limit of one-dimensional noncommutative CW-complexes. Then, $\UU(A)$ is amenable in the norm topology. In particular, Conjecture \ref{conj1} holds for all nuclear, simple, separable, $\mathcal Z$-stable, unital $C^*$-algebras which satisfy the UCT and have torsion-free $K_0$.
    \end{Thm}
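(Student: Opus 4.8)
\emph{Strategy and the ``in particular'' clause.} The plan is to reduce, via the Elliott classification, to the case of a single one-dimensional NCCW complex, and then to split the unitary group of such an algebra into a compact part and a loop-group part handled by Pestov. A one-dimensional NCCW complex is stably finite and has torsion-free $K_0$ (indeed finitely generated free), hence so is any inductive limit of such algebras; conversely, by the classification of simple, separable, unital, nuclear, $\mathcal Z$-stable $C^*$-algebras satisfying the UCT together with the realisation of the admissible Elliott invariants by inductive limits of Elliott--Thomsen building blocks (Elliott--Gong--Lin--Niu; Gong--Lin--Niu; Robert), every such algebra which is in addition stably finite and has torsion-free $K_0$ is an inductive limit of one-dimensional NCCW complexes. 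Granting that $\UU(A)$ is amenable in norm whenever $A$ is such an inductive limit, Conjecture~\ref{conj1} follows for these algebras: $(iv)$ holds, $(iii)$ and $(v)$ follow from $(iv)$ by Theorem~\ref{ivimpliesiii} and the {\rm SIN} argument of the introduction, and $(i)$ holds directly, since $A$ is nuclear and, being unital, simple, stably finite and nuclear, admits a tracial state (Blackadar--Handelman, Haagerup) and hence has the QTS property, whence $(ii)$ by Ozawa. So it remains to treat the first assertion.

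\emph{Reduction to building blocks.} I would write $A=\overline{\bigcup_n A_n}$ as the closure of an increasing union of sub-$C^*$-algebras, each a \emph{unital} one-dimensional NCCW complex with unital connecting embeddings --- the classification models can be arranged this way. If $\|a-u\|$ is small for $a\in A_n$ and $u\in\UU(A)$, then $a^*a\in A_n$ is positive and invertible and $v:=a(a^*a)^{-1/2}\in\UU(A_n)$ is close to $u$; hence $\bigcup_n\UU(A_n)$ is dense in $\UU(A)$. Amenability of topological groups is inherited by closures of increasing unions of amenable subgroups: in the fixed-point picture, the fixed-point sets of the groups $\UU(A_n)$ in a continuous affine action on a compact convex set form a decreasing net of nonempty compact convex sets, and a point of their intersection is fixed by $\overline{\bigcup_n\UU(A_n)}=\UU(A)$. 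It therefore suffices to show that $\UU(B)$ is amenable in norm for every unital one-dimensional NCCW complex $B$.

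\emph{The building block.} Such a $B$ is a pullback
\[
B=\{(f,x)\in C([0,1],F_1)\oplus F_0 : f(0)=\phi_0(x),\ f(1)=\phi_1(x)\},
\]
with $F_0,F_1$ finite-dimensional and $\phi_0,\phi_1\colon F_0\to F_1$ unital $*$-homomorphisms (unitality of $B$ forces the $\phi_i$ to be unital). Evaluation of the $F_0$-coordinate is a continuous homomorphism $\pi\colon\UU(B)\to\UU(F_0)$, $(f,x)\mapsto x$, surjective because $\UU(F_1)$ is connected (so a path from $\phi_0(x)$ to $\phi_1(x)$ exists) and open because $\UU(B)$ and $\UU(F_0)$ are Polish groups; its kernel is the based loop group $\Omega\UU(F_1)=\{f\in C([0,1],\UU(F_1)):f(0)=f(1)=\mathbf 1\}$. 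We obtain a topological extension
\[
1\longrightarrow\Omega\UU(F_1)\longrightarrow\UU(B)\xrightarrow{\ \pi\ }\UU(F_0)\longrightarrow 1 .
\]
Here $\UU(F_0)$ is compact, hence amenable, and amenability passes through extensions of topological groups: the $\Omega\UU(F_1)$-fixed points in a continuous affine action of $\UU(B)$ on a compact convex set form a nonempty compact convex $\UU(B)$-invariant set on which $\UU(B)$ acts through the amenable group $\UU(F_0)$, so there is a fixed point. Thus everything reduces to the amenability of $\Omega\UU(F_1)$. Writing $F_1=\bigoplus_j M_{n_j}$ gives $\Omega\UU(F_1)=\prod_j\Omega\UU(n_j)$, a finite product; since amenability is stable under finite products, it suffices to show that the based loop group $\Omega\UU(n)$ of the compact Lie group $\UU(n)$, in the topology of uniform convergence, is amenable. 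This is exactly what Pestov's work on amenability of gauge and loop groups provides, by the stochastic (Brownian-bridge) method on the loop space of a compact Lie group --- the same circle of ideas behind his theorem that $\UU(M_n(C[0,1]))=C([0,1],\UU(n))$ is amenable. Assembling the steps gives amenability of $\UU(B)$, hence of $\UU(A)$.

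\emph{Main obstacle.} The only genuinely hard input is the amenability of $\Omega\UU(n)$ --- equivalently, amenability of the non-finite-dimensional part of the unitary group of a one-dimensional NCCW complex; the rest is soft permanence properties of amenable topological groups applied to the pullback structure. Two points need care: ensuring that the classification yields a presentation of $A$ as an honest increasing union of \emph{unital} one-dimensional NCCW sub-$C^*$-algebras (otherwise one must deal with non-injective connecting maps and non-unital blocks directly), and checking that Pestov's construction genuinely covers the \emph{based} loop group, with the boundary condition imposed at both endpoints, and not merely the path group or the free loop group.
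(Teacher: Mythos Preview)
Your approach to the building block differs from the paper's and is worth recording. The paper embeds $\UU(B)$ as a closed \emph{cocompact} subgroup of $C([0,1],\UU(F))$ (after splitting off $\ker\gamma$) and then invokes the Pestov--Schneider lemma that closed cocompact subgroups of amenable SIN groups are amenable. Your extension argument $1\to\Omega\UU(F_1)\to\UU(B)\to\UU(F_0)\to 1$ is structurally appealing, but it shifts the hard step to the based loop group $\Omega\UU(n)$, which you rightly flag. Note that $\Omega\UU(n)$ is itself cocompact in $C([0,1],\UU(n))$, so the same Pestov--Schneider lemma already gives you what you need --- you do not have to decide whether the Malliavin--Malliavin or Pestov construction literally treats the \emph{based} loop group. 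One further wrinkle: unitality of $B$ does not force $\phi_0,\phi_1$ to be unital in the general NCCW definition (summands of $E$ may be killed), so your identification of $\ker\pi$ with $\Omega\UU(F_1)$ needs a preliminary reduction; the paper's step of splitting off $\ker\gamma$ is the clean way to do this.

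The reduction to building blocks, however, has a genuine gap that the paper resolves by a different mechanism. You assume the system can be arranged with unital \emph{injective} connecting maps, so that $A$ is an increasing union of honest one-dimensional NCCW sub-$C^*$-algebras. The first sentence of the theorem concerns \emph{arbitrary} inductive limits of such algebras, and neither the classification input nor the general hypothesis guarantees injectivity; you acknowledge this but do not address it. The paper instead passes to the images $\psi_n(A_n)\subset A$, which are quotients of $A_n$ and need not be NCCW complexes at all. The key observation is that $\psi_n(A_n)$ still has stable rank one, so $\UU(\psi_n(A_n))/\UU_0(\psi_n(A_n))\cong K_1(\psi_n(A_n))$ is abelian, while $\UU_0(\psi_n(A_n))=\psi_n(\UU_0(A_n))$ is a continuous image of the amenable group $\UU_0(A_n)$ and hence amenable; thus $\UU(\psi_n(A_n))$ is amenable as an extension of abelian by amenable, and one finishes by the directed-union argument. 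This stable-rank-one/$K_1$ trick is precisely the missing idea in your proposal; without it you would need either a structure theorem for quotients of one-dimensional NCCW complexes or an argument that the classification models can always be taken with injective unital connecting maps, and you provide neither.
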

    
    The proof relies on recent progress in the Elliott classification program, see \cite{MR4228503}. We will comment in Section \ref{exrem} on further directions concerning applications of classification results in order to resolve Conjecture \ref{conj1} for particular classes of $C^*$-algebras.
    
    \section{Notions of amenability}
    
    \subsection{Strong amenability and symmetric amenability}
    
    Let $A$ be a complex algebra and consider $A \otimes_{\mathbb C} A$ with the natural bi-module structure over $A$. A Banach algebra $A$ is said to be amenable if there exists a net $\Delta_i \in A \otimes_{\mathbb C} A$ such that $\|\Delta_i\|_{\wedge}$ is uniformly bounded, $m(\Delta_i)$ is an approximate unit for $A$ and
    $\|a \Delta_i - \Delta_i a\|_{\wedge} $ tends to zero for each $a \in A.$ Here, $m \colon A \otimes_{\mathbb C} A \to A$ denotes the multiplication and $\|.\|_{\wedge}$ the projective tensor norm on $A \otimes_{\mathbb C}A.$ Such a net $(\Delta_i)_i$ is called approximate diagonal for $A$.
    
    Following \cite{MR1388200} we say that a Banach algebra is symmetrically amenable if $\Delta_i$ can be chosen invariant under the flip $x \otimes y \mapsto y \otimes x.$
    
    Ozawa \cite{MR3156986} proved the following theorem.
    
    \begin{Thm}[Ozawa]
    Let $A$ be a unital $C^*$-algebra. The following are equivalent:
    \begin{enumerate}
        \item $A$ is nuclear and has the QTS property.
        \item $A$ is symmetrically amenable,
        \item $A$ has a symmetric approximate diagonal in the set
        $$\left\{\sum_{i} x_i^* \otimes x_i \mid \sum_{i} \|x_i\|^2 \leq 1 \right\}.$$
    \end{enumerate}
    \end{Thm}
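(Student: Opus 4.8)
The plan is to establish the cycle of implications $(3)\Rightarrow(2)\Rightarrow(1)\Rightarrow(3)$, with the last implication carrying essentially all of the difficulty. The implication $(3)\Rightarrow(2)$ is immediate from the definitions: if $\Delta_i=\sum_j x_{ij}^*\otimes x_{ij}$ lies in the prescribed set, then $\|\Delta_i\|_{\wedge}\le\sum_j\|x_{ij}^*\|\,\|x_{ij}\|=\sum_j\|x_{ij}\|^2\le1$, so the net is uniformly bounded, and by hypothesis it is already flip-invariant; the approximate-unit and asymptotic-centrality conditions are part of the assertion that it is an approximate diagonal. Hence $A$ is symmetrically amenable.

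For $(2)\Rightarrow(1)$, nuclearity follows by forgetting the flip-invariance: a symmetric approximate diagonal is in particular an approximate diagonal, so $A$ is amenable as a Banach algebra and therefore nuclear by Connes--Haagerup. The QTS property is the substantive point. First I would note that symmetric amenability passes to quotients: for an ideal $I\trianglelefteq A$ the maps $q\otimes q$ (with $q\colon A\to A/I$) commute with the flip and are contractive for $\|\cdot\|_{\wedge}$, so they carry a symmetric approximate diagonal of $A$ to one of $A/I$. It therefore suffices to show that every unital symmetrically amenable $C^*$-algebra carries a nontrivial tracial state. Writing a symmetric approximate diagonal as $\Delta_i=\sum_j u_{ij}\otimes v_{ij}$ and setting $\theta_i(x)=\sum_j u_{ij}xv_{ij}$, one checks that $\theta_i(1)=m(\Delta_i)\to1$, that $\|a\theta_i(x)-\theta_i(x)a\|\to0$ for all $a$ (from $\|a\Delta_i-\Delta_i a\|_{\wedge}\to0$), and, crucially using the flip-invariance $\sum_j u_{ij}\otimes v_{ij}=\sum_j v_{ij}\otimes u_{ij}$, that $\theta_i(xy)-\theta_i(yx)\to0$ weakly. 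Composing with any state and passing to an ultralimit then yields a tracial state $\tau$ with $\tau(1)=1$, so $\tau\ne0$; applying this to each nontrivial quotient gives the QTS property.

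The heart of the matter is $(1)\Rightarrow(3)$, and this is where I expect the main obstacle. The first ingredient is that nuclearity already supplies approximate diagonals in the positive set: the canonical diagonal of $M_n$, namely $D_n=\tfrac1n\sum_{k,l}e_{kl}\otimes e_{lk}$, can itself be written as $\sum_j y_j^*\otimes y_j$ with $\sum_j\|y_j\|^2\le1$ by taking $y_j=\tfrac1n w_j^*$ for the $n^2$ Weyl unitaries $w_j$ (using the resolution of the flip $F=\tfrac1n\sum_j w_j\otimes w_j^*$); transporting $D_n$ through a completely positive approximation $A\xrightarrow{\phi_i}M_{n_i}\xrightarrow{\psi_i}A$ preserves the positive form, since each $\psi_i$ is $*$-preserving and contractive, and the asymptotic multiplicativity of the $\psi_i$ (the Connes--Haagerup mechanism) makes the resulting net $\sum_j\psi_i(y_j)^*\otimes\psi_i(y_j)$ an approximate diagonal with $\sum_j\|\psi_i(y_j)\|^2\le1$. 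What this does not give is flip-invariance: $\sigma\Delta_i\ne\Delta_i$ in general, and the naive symmetrization $\tfrac12(\Delta_i+\sigma\Delta_i)$ destroys the positive-cone form.

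The obstacle, and the role of QTS, is to make the symmetrization compatible with the positive cone. The plan is to run a Dixmier-averaging argument: conjugating $\Delta_i$ by elements $u\otimes u$, $u\in\UU(A)$, preserves both the positive form and, after averaging over $u$, the centrality condition, and one averages so as to push $\Delta_i$ toward its flip-invariant (equivalently, tracial) part. Dually, a positive approximate diagonal corresponds to a diagonal state on $A\otimes_{\min}A^{\mathrm{op}}$, flip-invariance corresponds to traciality of that state, and the averaging is Dixmier approximation at the level of states. The QTS property is precisely what guarantees that this averaging does not collapse: because every nontrivial quotient carries a trace, the convex hulls of unitary conjugates meet the tracial/central subspace (the Dixmier property), so the averaged functionals remain states and converge to a flip-invariant positive virtual diagonal, which is then approximated in norm by elements of the prescribed set. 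Reconciling flip-invariance, positivity of the form, and the bound $\sum_j\|x_{ij}\|^2\le1$ simultaneously, via this QTS-enabled Dixmier approximation, is the step I expect to be by far the most delicate.
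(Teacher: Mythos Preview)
The paper does not prove this theorem. It is stated as a result of Ozawa with a citation to \cite{MR3156986} (``Ozawa \cite{MR3156986} proved that the following theorem''), and no argument is given in the present paper; the theorem serves purely as background for Conjecture~\ref{conj1}. So there is no proof here to compare your proposal against.

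That said, your sketch is broadly faithful to Ozawa's actual argument in \cite{MR3156986}. The implications $(3)\Rightarrow(2)$ and the nuclearity half of $(2)\Rightarrow(1)$ are indeed routine, and your construction of a trace from a symmetric approximate diagonal via the maps $\theta_i(x)=\sum_j u_{ij}xv_{ij}$ is the standard route to the QTS half. For $(1)\Rightarrow(3)$ you correctly identify that the crux is a Dixmier-type approximation: Ozawa's paper is precisely about establishing the (weak) Dixmier property for nuclear $C^*$-algebras with QTS, and then using it to symmetrize a positive approximate diagonal. Your description of this step is a plan rather than a proof---you flag it as ``by far the most delicate'' and leave the actual averaging argument unexecuted---so as a self-contained proof it is incomplete, but as an outline of the strategy it is accurate. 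If your intent was to reproduce the paper's treatment, the correct move is simply to cite Ozawa.
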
 
    
    A possible different notion is the notion of strong amenability, where one requires that a symmetric approximate diagonal can be found in the convex hull of the set $\{u^* \otimes u \mid u \in {\rm U}(A)\}$. It is clear that strong amenability implies symmetric amenability. As mentioned before, it is an long-standing open problem if strong amenability and symmetric amenability are equivalent, see for example \cite{MR320764}.
    
    An important example of a strongly amenable $C^*$-algebra was found by Rosenberg in this study of groups crossed products. He showed that $(\mc O_n \otimes \mathcal K)^+$ is strongly amenable, whereas $\mc O_n$ itself is obviously not, see \cite{MR467331}.
    
    \subsection{F\o lner sets for topological groups}
    In this section we recall basic concepts of amenability for topological groups and particularly a generalisation of the F\o lner criterion for these groups which provides a useful tool for the proof of one implication in Conjecture \ref{conj1}. The following definitions can be found in \cite{MR2098458}.
    \begin{Def}
    Consider a topological group $G$.
    \begin{enumerate}
    \item A bounded complex valued function $f\colon G\to\mb C$ is said to be \emph{right uniformly continuous} if $\lim\limits_{x\to1}\norm{f_x-f}=0$ where $f_x(y):= f(xy)$ for all $y\in G$. We denote the space of all these functions by $\RUCB(G)$. Note, that $f_x\in\RUCB$ whenever $f\in\RUCB$. Furthermore, we set $f^x(y)\coloneqq f(yx)$ for all $y\in G$.
        \item Let $\mu \in\RUCB(G)^\ast$ be a functional. Then, $m$ is a
    \emph{left invariant mean} if $\mu(1)=\norm{\mu}=1$ and $\mu(f_x)=\mu(f)$ for all $x\in G$ and $f\in \RUCB(G)$, and a \emph{right invariant mean} if $\mu(1)=\norm{\mu}=1$ and $\mu(f^x)=\mu(f)$ for all $x\in G$ and $f\in \RUCB(G)$.
    \item The group $G$ is called \emph{amenable} if it has a left invariant mean and \emph{skew-amenable} if it has a right invariant mean.
    \end{enumerate} 
    \end{Def}
    
    The following characterization of amenability was obtained in \cite{MR3809992}. 
    
    \begin{Thm}
    \label{metrisableFolner}
    Let $G$ be a metrisable topological group with a right invariant metric $d$, that generates the topology. Then, the following are equivalent.
    \begin{enumerate}
    \item $G$ is amenable as a topological group.
    \item For every $\eps>0$, every finite set $E\subset G$ and every $\delta>0$ there exists a non-empty finite set $F\subset G$ such that for all $g\in E$
    there exists a bijection $\phi \colon F \to gF$ such that
    $$|\{h \in F \mid d(h,\phi(h))< \delta \}| \geq (1-\varepsilon)|F|.$$
    \end{enumerate}
    \end{Thm}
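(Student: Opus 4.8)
The plan is to prove the two implications separately; $(2)\Rightarrow(1)$ is routine, so I begin with it. Given a finite set $\Phi\subset\RUCB(G)$, a finite set $E\subset G$ and $n\in\mb N$, use right uniform continuity of the finitely many $f\in\Phi$ to pick $\delta>0$ with $\norm{f_z-f}<\tfrac1n$ for every $f\in\Phi$ whenever $d(z,1)<\delta$, then apply $(2)$ with this $\delta$, with $\eps=\tfrac1n$, and with $E$, obtaining a finite nonempty set $F$ and, for each $g\in E$, an injection $\phi_g\colon F\to gF$ --- necessarily a bijection, since $\abs{gF}=\abs F$ --- with $\abs{\{h\in F: d(h,\phi_g(h))<\delta\}}\ge(1-\tfrac1n)\abs F$. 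Put $m(f):=\tfrac1{\abs F}\sum_{h\in F}f(h)$, a mean on $\RUCB(G)$. Reindexing $\sum_{h\in F}f(gh)=\sum_{h\in F}f(\phi_g(h))$ and writing $\phi_g(h)=z_hh$, so that $d(z_h,1)=d(\phi_g(h),h)$ by right invariance, one gets $\abs{m(f_g)-m(f)}\le\tfrac1n+\tfrac2n\norm f_\infty$ for all $f\in\Phi$ and $g\in E$. A weak-$*$ cluster point of this net of means, as $E\nearrow G$, $\Phi\nearrow\RUCB(G)$ and $n\to\infty$, is a left invariant mean, so $G$ is amenable.

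For $(1)\Rightarrow(2)$ I would argue in three steps. \emph{Step A.} From a left invariant mean on $\RUCB(G)$ --- which, $\RUCB(G)$ being a unital commutative $C^*$-algebra that separates the points of $G$, is a weak-$*$ limit of finitely supported probability measures --- together with the weak-$*$ continuity of left translation, produce a net $(\mu_i)$ of finitely supported probability measures on $G$ with $g_*\mu_i-\mu_i\to 0$ weak-$*$ on $\RUCB(G)$ for each $g\in G$, where $g_*\nu$ denotes the pushforward of $\nu$ under $x\mapsto gx$. \emph{Step B.} Upgrade this soft invariance to a quantitative statement at scale $\delta$. The structural input is that, $d$ being right invariant, every bounded $d$-Lipschitz function lies in $\RUCB(G)$: if $d(z,1)<\eta$ then $\abs{f(zy)-f(y)}\le L\,d(zy,y)=L\,d(z,1)$ for all $y$. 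Hence the $\mu_i$ are asymptotically invariant when tested against all bounded Lipschitz functions, and combining this with a convexity argument --- a convex set has the same closure for the weak topology of a norming family of functionals as for the norm --- and with McShane extension of Lipschitz functions from finite subsets of $G$, one extracts, for prescribed $\eps,E,\delta$, a single finitely supported probability measure $\mu$ such that for every $g\in E$ there is a coupling of $\mu$ and $g_*\mu$ that moves all but an $\eps$-fraction of the mass a $d$-distance less than $\delta$; this is the Strassen--Kantorovich reformulation of $\delta$-approximate invariance. \emph{Step C.} Decompose $\mu=\int_0^\infty\mf 1_{F_t}\,dt$ into its finite level sets, noting $\abs{gF_t}=\abs{F_t}$ for all $t$; slicing the couplings of Step B through the layers and applying a deficiency version of the marriage theorem yields, for a suitable level $t$ with $F_t\neq\emptyset$, an injection $\phi_g\colon F_t\to gF_t$ that displaces at least $(1-\eps)\abs{F_t}$ of the points less than $\delta$, which is precisely condition $(2)$.

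The main obstacle is Step B: converting the soft weak-$*$ invariance of Step A into a quantitative transport estimate when $G$ need not be locally compact and there is no Haar measure to average against. The resolution --- and the reason the metric has to appear in the statement --- is that exact Reiter or F\o lner invariance is genuinely unattainable at this level of generality, whereas invariance up to displacement $\delta$ is controlled by a class of test functions, essentially the $1/\delta$-Lipschitz fattened indicators $x\mapsto\max(0,1-d(x,A)/\delta)$, which does lie in $\RUCB(G)$; checking that the convexity argument can be run against this class and that the couplings it produces can be sliced compatibly with the marriage argument of Step C is where the real work lies. For these last two steps I would follow the argument of \cite{MR3809992}.
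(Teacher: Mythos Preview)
The paper does not prove this theorem; it simply quotes it from \cite{MR3809992}. Your sketch is a faithful outline of the argument given there: the direction $(2)\Rightarrow(1)$ by averaging over the F\o lner set and passing to a weak-$*$ cluster point is correct as written, and your three-step plan for $(1)\Rightarrow(2)$ --- asymptotically invariant finitely supported measures, upgrading to a transport estimate against bounded Lipschitz tests via convexity and Strassen--Kantorovich duality, then layer-cake plus the marriage lemma --- is exactly the architecture of the proof in \cite{MR3809992}, to which you rightly defer for the details of Steps B and C.

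One small remark on Step A: the reason every mean on $\RUCB(G)$ is a weak-$*$ limit of finitely supported probability measures is not really that $\RUCB(G)$ separates points, but the Hahn--Banach observation that any state on a unital function algebra lies in the weak-$*$ closed convex hull of the point evaluations (otherwise some $f$ would satisfy $m(f)>\sup_x f(x)$). This is harmless, but worth stating cleanly if you write the argument out.
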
We call a map $\phi$ as in the preceding theorem a $(1-\varepsilon,\delta)$-matching with respect to $E$. The sets $F$ arising as in the theorem are called F\o lner sets. It is standard to see that in case $G$ is SIN, one can choose the sets $F$ to be symmetric, i.e., we have $F=F^{-1}$.
    A more or less immediate application of this characterization is implication $(iv) \Rightarrow (iii)$ in Conjecture \ref{conj1} that we will now prove:
    
    \begin{Thm} \label{ivimpliesiii}
    Let $A$ be a unital $C^*$-algebra. If ${\rm U}(A)$ is amenable in the norm topology, then $A$ is strongly amenable.
    \end{Thm}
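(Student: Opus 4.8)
The plan is to manufacture a (symmetric) approximate diagonal lying in the convex hull of $\{u^*\otimes u\mid u\in{\rm U}(A)\}$ directly out of F\o lner sets for ${\rm U}(A)$, and the only serious work is choosing the F\o lner sets so that commutators are small. First note that the norm metric $d(u,v)=\|u-v\|$ on ${\rm U}(A)$ is bi-invariant (because $w$ unitary gives $\|uw-vw\|=\|wu-wv\|=\|u-v\|$) and generates the norm topology, so ${\rm U}(A)$ is a metrisable topological group carrying a compatible right-invariant metric; hence the hypothesis and Theorem \ref{metrisableFolner} produce, for every $\eps,\delta>0$ and every finite $E\subset{\rm U}(A)$, a F\o lner set $F$ with $(1-\eps,\delta)$-matchings for all $g\in E$. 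We shall also use freely that for a unitary $g$ the four maps $x\otimes y\mapsto g^{\pm1}x\otimes y$ and $x\otimes y\mapsto x\otimes g^{\pm1}y$ are isometric bijections of $(A\otimes_{\mathbb C}A,\|\cdot\|_\wedge)$, and that $\|x\otimes y\|_\wedge\le\|x\|\,\|y\|$.

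For a non-empty finite $F\subset{\rm U}(A)$ put
$$\Delta_F:=\frac1{|F|}\sum_{h\in F}h\otimes h^*\ \in\ A\otimes_{\mathbb C}A .$$
Since $h\otimes h^*=(h^*)^*\otimes h^*$, each summand and therefore $\Delta_F$ lies in $\conv\{u^*\otimes u\mid u\in{\rm U}(A)\}$; in particular $\|\Delta_F\|_\wedge\le1$, $\Delta_F$ is symmetric (every element of that convex hull is fixed by $x\otimes y\mapsto y^*\otimes x^*$, which is the flip relevant for the $*$-algebra $A\otimes_{\mathbb C}A$), and $m(\Delta_F)=\frac1{|F|}\sum_{h\in F}hh^*=1_A$, an approximate unit. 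Thus the only thing to arrange is smallness of $\|g\Delta_F-\Delta_F g\|_\wedge$ for $g$ in a prescribed finite set. The crucial identity is that, applying the isometry $x\otimes y\mapsto x\otimes yg^*$ (right multiplication by $g^*$ in the second leg) to $g\Delta_F-\Delta_F g$, the term $g\Delta_F$ becomes $\frac1{|F|}\sum_{h\in F}(gh)\otimes(gh)^*=\Delta_{gF}$ while $\Delta_F g$ becomes $\Delta_F$, so
$$\|g\Delta_F-\Delta_F g\|_\wedge=\|\Delta_{gF}-\Delta_F\|_\wedge .$$

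Now fix $\eps,\delta>0$ and finite $E\subset{\rm U}(A)$, take $F$ as above, and fix $g\in E$ with an injective matching $\phi\colon F\to gF$ such that $\|h-\phi(h)\|<\delta$ for all $h$ in some $F_0\subseteq F$ with $|F_0|\ge(1-\eps)|F|$. As $h\mapsto gh$ is a bijection we have $|gF|=|F|$, so $\phi$ is a bijection and $\Delta_{gF}=\frac1{|F|}\sum_{h\in F}\phi(h)\otimes\phi(h)^*$. From $\phi(h)\otimes\phi(h)^*-h\otimes h^*=(\phi(h)-h)\otimes\phi(h)^*+h\otimes(\phi(h)-h)^*$ and the cross-norm estimate we get $\|\phi(h)\otimes\phi(h)^*-h\otimes h^*\|_\wedge\le2\|h-\phi(h)\|$, which is $<2\delta$ for $h\in F_0$ and always $\le2$. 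Averaging over $h\in F$ gives $\|\Delta_{gF}-\Delta_F\|_\wedge\le2\delta+2\eps$, hence $\|g\Delta_F-\Delta_F g\|_\wedge\le2\delta+2\eps$ for all $g\in E$.

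It remains to assemble a net: index by triples $(E,\eps,\delta)$ with $E\subset{\rm U}(A)$ finite, directed by $E\subseteq E'$, $\eps\ge\eps'$, $\delta\ge\delta'$, and let $\Delta_{(E,\eps,\delta)}$ be the corresponding $\Delta_F$. For a fixed $u\in{\rm U}(A)$ one eventually has $u\in E$, so $\|u\Delta_i-\Delta_i u\|_\wedge\to0$; since every element of a unital $C^*$-algebra is a linear combination of at most four unitaries, $\|a\Delta_i-\Delta_i a\|_\wedge\to0$ for all $a\in A$. Together with $\|\Delta_i\|_\wedge\le1$, $m(\Delta_i)=1_A$ and $\Delta_i\in\conv\{u^*\otimes u\}$, this is a symmetric approximate diagonal witnessing strong amenability of $A$. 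The one point that needs care is purely a bookkeeping choice: one must take $\Delta_F$ to involve $h\otimes h^*$ and not $h^*\otimes h$, because only then does $g\Delta_F-\Delta_F g$ become — after the isometry above — the difference $\Delta_{gF}-\Delta_F$ of \emph{left} translates, which is exactly the configuration that the matchings of Theorem \ref{metrisableFolner} (stated for a right-invariant metric) control; everything else is the routine estimate given above.
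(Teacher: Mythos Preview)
Your proof is correct and follows essentially the same approach as the paper: build $\Delta_F=\frac{1}{|F|}\sum_{h\in F}h\otimes h^*$ from F\o lner sets supplied by Theorem~\ref{metrisableFolner} and estimate $\|g\Delta_F-\Delta_Fg\|_\wedge$ via the matching $\phi$, then pass to arbitrary $a\in A$ using the four-unitaries decomposition. Your reduction $\|g\Delta_F-\Delta_Fg\|_\wedge=\|\Delta_{gF}-\Delta_F\|_\wedge$ via the isometry $x\otimes y\mapsto x\otimes yg^*$, together with the observation that $\phi$ is automatically a bijection, is in fact a slight streamlining of the paper's computation (which splits $F$ into pieces $F'$, $F''$ and gets a bound of $4\varepsilon$ where you get $2\delta+2\varepsilon$); one small caveat is that the flip defining symmetric amenability in the paper is $x\otimes y\mapsto y\otimes x$, not $x\otimes y\mapsto y^*\otimes x^*$, but this is harmless since the paper's proof does not verify symmetry either and one may simply average $\Delta_F$ with its flip while remaining in $\conv\{u^*\otimes u\}$.
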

    \begin{proof}
    Since ${\rm U}(A)$ is amenable in the norm topology, Theorem \ref{metrisableFolner} leads to the existence of symmetric F\o lner set $F:=F_{(E,\varepsilon)}$ for a finite $E\subset {\rm U}(A)$ and $\varepsilon>0$. Let $g\in E$ and $\phi\colon F\to gF$ be a $(1-\eps,\eps)$-matching with respect to $E$. We define the net
    \begin{align*}
    \Delta_{F_{(E,\varepsilon)}}=\frac{1}{\abs{F}}\sum_{u\in F}u\otimes u^\ast
    \end{align*}
    where the index set is the set of pairs $(E,\varepsilon)$, where  $E\subset{\rm U}(A)$ is finite and $\varepsilon>0$.
    We now want to show that the net $(\Delta_{F_{(E,\varepsilon)}})_{(E,\varepsilon)}$ is an approximate diagonal. Since $F \subset {\rm U}(A)$, the element $\Delta_{F}$ is a contraction and $m(\Delta_{F_{(E,\varepsilon)}})=1$ for every $(E,\varepsilon)$. Let $F':= \{ h\in F \mid d(h,\phi(h))< \delta \}$ and $F'':= g^{-1}(\phi(F'))$. Using the matching $\phi$ as well as the unitarity of $u$ and $g$ we get
    \begin{align*}
    \norm*{\frac{1}{\abs{F}}\sum_{u\in F\setminus F''}gu\otimes u^\ast}_\wedge \leq \frac{1}{\abs{F}}\abs{F\setminus F''}\leq\eps.
    \end{align*}
    For convenience, we write $x=_\delta y$ if $\|x-y\|\leq \delta$. Hence,
    \begin{align*}
    g\Delta_{F_{(E,\varepsilon)}}&=\frac{1}{\abs{F}}\sum_{u\in F} gu\otimes u^\ast=\frac{1}{\abs{F}}\sum_{u\in F''}gu\otimes u^\ast+\frac{1}{\abs{F}}\sum_{u\in F\setminus F''}gu\otimes u^\ast\\
    &=_\eps \frac{1}{\abs{F}}\sum_{u\in F''}gu\otimes u^\ast=\frac{1}{\abs F}\sum_{\tilde u\in F'}\phi(\tilde u)\otimes \phi(\tilde u)^\ast g\\
    &=_{2\eps} \frac{1}{\abs{F}}\sum_{\tilde u\in F'}\tilde u\otimes \tilde u^\ast g=_\eps \frac{1}{\abs{F}}\sum_{\tilde u\in F}\tilde u\otimes \tilde u^\ast g\\
    &=\Delta_{F_{(E,\varepsilon)}}g
    \end{align*}
    since $\phi(\tilde u)=gu$ and $\tilde u=_\eps \phi(\tilde u)$.
    Altogether, $\norm{g\Delta_{F_{E}}-\Delta_{F_{E}}g}\leq 4\eps$ holds for an arbitrary $g\in E$. 
    It follows that $\|a \Delta_{F_E} - \Delta_{F_E}a\| \leq 16 \|a\| \varepsilon$ for any particular $a \in A$ if $E$ is large enough. Indeed,  every contraction in $A$ is a linear combination of four unitaries in $A$. Finally, note that $\Delta_{F}$ is invariant under the flip if $F$ is symmetric. This proves the claim.
    \end{proof}
    
    \subsection{Constructions with amenable groups}
    
    Let us summarize various constructions that preserve the class of amenable groups, see Proposition 4.1 in \cite{MR3644011}.
    
    \begin{Thm} \label{amen}
    Let G be a topological group.
    \begin{enumerate}
        \item \label{amenopen}
        If $G$ is amenable, then every open subgroup of $G$ is amenable.
        \item \label{amenunion}
        If $G$ is a directed union of a family $(H_\alpha)_{\alpha \in A}$ of closed subgroups, and if each $H_{\alpha}$ is amenable, then $G$ is amenable.
        \item \label{amenext} If $G$ has an amenable closed normal subgroup N such that the
    quotient $G/N$ is amenable, then $G$ is amenable.
        \item \label{amendense}
        Let $H$ be a topological group such that there exists a continuous
    homomorphism $H \to G$ with dense image; if $H$ is amenable,
    then so is $G$.
        \item If $H$ is a dense subgroup of $G$, then $G$ is amenable if and only
    if $H$, endowed with the induced topology, is amenable.
    \end{enumerate}
    \end{Thm}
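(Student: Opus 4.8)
The plan is to treat all five statements uniformly as instances of transporting a left-invariant mean across a positive, unital, norm-contractive linear map between spaces of right-uniformly continuous functions. The recurring principle is: if $T\colon \RUCB(G_1)\to\RUCB(G_2)$ is positive, unital and contractive, and if on the relevant translates one has $T(f_x)=(Tf)_{\theta(x)}$ for a homomorphism $\theta$ with suitable range, then $m_1\circ T$ is a mean on $\RUCB(G_2)$ invariant under the translations in the range of $\theta$; full invariance is then forced either because $\theta$ is onto or by a density-and-continuity argument. First I would record the elementary fact, used throughout, that restriction of an element of $\RUCB(G)$ to a subgroup, and pullback along a continuous homomorphism, again yield right-uniformly continuous functions, the modulus of continuity only improving.

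For the dense-image statement $(4)$, given a continuous homomorphism $\pi\colon H\to G$ with dense image and $f\in\RUCB(G)$, the pullback $f\circ\pi$ lies in $\RUCB(H)$ because $\|(f\circ\pi)_x-f\circ\pi\|\le\|f_{\pi(x)}-f\|$, and I would set $m_G(f):=m_H(f\circ\pi)$. Positivity, unitality and norm are immediate; invariance under the dense subgroup $\pi(H)$ follows from the identity $f_{\pi(x)}\circ\pi=(f\circ\pi)_x$, and invariance under all of $G$ then follows because $g\mapsto m_G(f_g)$ is continuous (indeed $|m_G(f_g)-m_G(f_{g'})|\le\|f_{gg'^{-1}}-f\|$) and constant on a dense set. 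Statement $(5)$ is the inclusion $H\hookrightarrow G$ of a dense subgroup: one direction is exactly $(4)$, and for the converse one extends each $f\in\RUCB(H)$ to its unique right-uniformly continuous extension $\bar f\in\RUCB(G)$ and sets $m_H(f):=m_G(\bar f)$.

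For the open-subgroup statement $(1)$, I would fix a transversal $T\ni 1$ for the right cosets $H\backslash G$ and extend $f\in\RUCB(H)$ to $\hat f\in\RUCB(G)$ by $\hat f(ht):=f(h)$ for $h\in H$, $t\in T$. Openness of $H$ guarantees that for $x$ near the identity one has $x\in H$, whence $\hat f_x$ and $\hat f$ differ by at most $\|f_x-f\|$, so $\hat f\in\RUCB(G)$; the key identity $\widehat{f_x}=(\hat f)_x$ for all $x\in H$ (again using $xh\in H$) makes $m_H(f):=m_G(\hat f)$ a left-invariant mean. For the directed-union statement $(2)$, I would pick left-invariant means $m_\alpha$ on each $\RUCB(H_\alpha)$ and set $m(f):=\lim_{\alpha\to\mathcal U}m_\alpha(f|_{H_\alpha})$ along an ultrafilter $\mathcal U$ refining the tail filter of the directed set. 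Given $x\in G$ choose $\alpha_0$ with $x\in H_{\alpha_0}$; for all $\alpha\ge\alpha_0$ one has $(f_x)|_{H_\alpha}=(f|_{H_\alpha})_x$, so $m_\alpha$ annihilates the difference, and passing to the ultralimit yields $m(f_x)=m(f)$.

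The extension statement $(3)$ is the most delicate, and I expect the descent to the quotient to be the main obstacle. The plan is a two-stage averaging. First, for $f\in\RUCB(G)$ define $F(g):=m_N\big(n\mapsto f(gn)\big)$; left-invariance of $m_N$ and normality of $N$ show that $F$ is constant on the cosets $gN$, so it descends to $\bar F\colon G/N\to\mathbb{C}$. The crucial technical point is that $\bar F\in\RUCB(G/N)$: using that the quotient map $q\colon G\to G/N$ is an open continuous homomorphism (here closedness of $N$ enters, to make $G/N$ a Hausdorff topological group) together with the uniform bound $\|\bar F_{q(u)}-\bar F\|\le\|f_u-f\|$, one transports a neighbourhood witnessing the right-uniform continuity of $f$ down to a neighbourhood of the identity in $G/N$. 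Granting this, I would set $m(f):=m_{G/N}(\bar F)$; the identity $\overline{F_x}=(\bar F)_{\bar x}$ together with left-invariance of $m_{G/N}$ gives $m(f_x)=m(f)$ for every $x\in G$, completing the proof. Throughout, the genuinely substantive work beyond the bookkeeping is exactly these verifications that the transported functions remain right-uniformly continuous; everything else is a formal transfer of invariant means, and the statements are standard (cf. Proposition 4.1 in \cite{MR3644011}).
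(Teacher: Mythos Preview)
The paper does not prove this theorem; it merely quotes it as Proposition~4.1 of \cite{MR3644011}. Your sketch is correct and follows the standard arguments---restriction/extension operators between $\RUCB$-spaces to transport means in (1), (4) and (5), an ultralimit along the directed set in (2), and the classical two-stage averaging in (3)---so there is nothing further to compare.
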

    
    The following lemma was proved in \cite[Corollary 1.2]{PS23}.
    	
    	\begin{Lemma}[Pestov-Schneider]\label{co-compact}
    		Let $G$ be an amenable {\rm SIN} group and let $H \subset G$ be a closed co-compact subgroup. Then $H$ is amenable.
    	\end{Lemma}
    
    \section{Proof of the main theorem}
    \subsection{Noncommutative CW-complexes}
    
    Noncommutative CW-complexes were introduced in \cite{MR1716199} in arbitrary dimensions. We are interested in the one-dimensional case, which was studied in detail in \cite{MR2970466}.
    \begin{Def}
    \label{nccw}
    A $C^*$-algebra $A$ is said to be a \emph{one-dimensional noncommutative CW-complex}  if it is the pullback given by the following diagram:
    \begin{equation*}
    \begin{tikzcd}
    A & {C([0,1],F)}\\
    E & {F\oplus F}
    \arrow["{\pi_1}", from=1-1, to=1-2]
    \arrow["\ev_0\oplus\ev_1", from=1-2, to=2-2]
    \arrow["{\pi_2}", from=1-1, to=2-1]
    \arrow["\gamma", from=2-1, to=2-2]
    \end{tikzcd}
    \end{equation*}
    where $E$ and $F$ are finite dimensional $C^*$ -algebras, $\ev_0$ and $\ev_1$ the evaluation maps at the endpoints.
    \end{Def}
    
    The theorem below can be found in \cite[Corollary 6]{MR4206535} and is a corollary of \cite[Theorem 1.1]{MR1174157}.
    
    \begin{Thm}[Pestov] \label{pestov}
    The group $C([0,1],{\rm U}(n))$ is amenable in the norm topology.
    \end{Thm}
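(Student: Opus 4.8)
The plan is to produce, on a closed normal subgroup of $C([0,1],\UU(n))$ with compact quotient, a net of probability measures that is asymptotically invariant under a dense subgroup, and then to assemble a left-invariant mean from it; the single genuinely analytic ingredient will be a Cameron--Martin type quasi-invariance theorem for Brownian motion on the compact Lie group $\UU(n)$. First I would reduce to a based path group: the map $\ev_0\colon C([0,1],\UU(n))\to\UU(n)$, $f\mapsto f(0)$, is a continuous surjective homomorphism onto the compact, hence amenable, group $\UU(n)$, and its kernel
\[
    PG\coloneqq\{\,f\in C([0,1],\UU(n)) : f(0)=\mf 1\,\}
\]
is a closed normal subgroup, so by Theorem~\ref{amen}~(\ref{amenext}) it suffices to show that $PG$ is amenable. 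Inside $PG$ let $H$ be the subgroup of finite-energy (Cameron--Martin) paths, i.e.\ the absolutely continuous $h$ with $h(0)=\mf 1$ and $\int_0^1\norm{h(t)^{-1}h'(t)}_2^2\,dt<\infty$; a short computation using unitarity of the values shows $H$ is a subgroup, and since every element of $PG$ is a uniform limit of smooth based paths (which lie in $H$) the inclusion $H\hookrightarrow PG$ has dense image. By Theorem~\ref{amen}~(\ref{amendense}) it then suffices to exhibit a mean on $\RUCB(PG)$ that is invariant under $H$; indeed, if $h_k\to g$ in $PG$ with $h_k\in H$, then $\norm{f_{h_k}-f_g}_\infty=\norm{f_{h_kg^{-1}}-f}_\infty\to0$ for $f\in\RUCB(PG)$, so $H$-invariance of a mean automatically forces full $PG$-invariance.

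The near-invariant measures come from Brownian motion on $\UU(n)$. For $T>0$ let $\nu_T\in\Prob(PG)$ be the law of the left-invariant Brownian motion on $\UU(n)$ of diffusivity $T$ started at $\mf 1$ (the solution of $dg_t=g_t\circ\sqrt T\,dW_t$ with $g_0=\mf 1$, $W$ a standard Brownian motion on $\mathfrak u(n)$); this is a Borel probability measure on $PG$ since such paths are continuous, and as $T\to\infty$ the motion spreads out ever faster over the fixed parameter interval, so translating it by a \emph{fixed} finite-energy path should perturb it less and less. Concretely, for $h\in H$ the translate $h\cdot\nu_T$ is absolutely continuous with respect to $\nu_T$, with Radon--Nikodym density a Girsanov exponential whose exponent has the shape $-\tfrac{1}{2T}\int_0^1\norm{h^{-1}h'}_2^2\,dt+X_T$ with $\norm{X_T}_{L^2(\nu_T)}=O(T^{-1/2})$; hence the density tends to $1$ in $\nu_T$-probability, and since it integrates to $1$ it then tends to $1$ in $L^1(\nu_T)$ by Scheff\'e's lemma, so that $\norm{h\cdot\nu_T-\nu_T}\to0$ as $T\to\infty$. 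This quasi-invariance estimate is exactly where I would invoke \cite[Theorem~1.1]{MR1174157}, applied to the group $\UU(n)$; It\^o calculus and Girsanov's theorem on a Lie group are the only real work involved here, everything else being formal.

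To conclude, the means $m_T(f)\coloneqq\int_{PG}f\,d\nu_T$ on $\RUCB(PG)$ satisfy $\abs{m_T(f_h)-m_T(f)}\le\norm f_\infty\,\norm{h\cdot\nu_T-\nu_T}\to0$ for each $h\in H$, so any weak$^\ast$ cluster point $m$ of $(m_T)_T$ — one exists by weak$^\ast$ compactness of the set of means — satisfies $m(f_h)=m(f)$ for all $h\in H$ and $f\in\RUCB(PG)$; by the density observation above, $m$ is then a left-invariant mean on $\RUCB(PG)$. Hence $PG$, and therefore $C([0,1],\UU(n))$, is amenable in the norm topology, recovering \cite[Corollary~6]{MR4206535}. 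The main obstacle is precisely the Cameron--Martin quasi-invariance estimate of the middle paragraph; the two reductions and the final averaging are soft and rely only on the tools recorded above.
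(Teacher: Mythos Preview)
The paper does not give its own proof of this theorem; it merely records it as \cite[Corollary~6]{MR4206535}, noting that Pestov deduces it from \cite[Theorem~1.1]{MR1174157} and also proves a stronger self-contained statement in \cite[Sections~3,~4]{MR4206535}. Your proposal is a faithful reconstruction of the first of these routes: the reduction to the based path group via Theorem~\ref{amen}~(\ref{amenext}), the density of finite-energy paths, the use of Brownian motion of large diffusivity on $\UU(n)$ to produce asymptotically left-invariant probability measures on $PG$, and the Girsanov-type estimate for the Radon--Nikodym density are precisely the ingredients behind Pestov's deduction from Malliavin--Malliavin.

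Two minor remarks. First, your appeal to Theorem~\ref{amen}~(\ref{amendense}) is slightly misplaced: that item transfers amenability along a dense-image homomorphism, whereas what you actually use (and correctly argue) is the direct observation that an $H$-invariant mean on $\RUCB(PG)$ is automatically $PG$-invariant because $g\mapsto f_g$ is norm-continuous. Second, in the Girsanov exponent the drift carries an $\Ad(g_t^{-1})$ factor, but since $\UU(n)$ is compact this is an isometry, so your stated bounds $-\tfrac{1}{2T}\int\norm{h^{-1}h'}^2$ and $\norm{X_T}_{L^2}=O(T^{-1/2})$ survive; the Scheff\'e argument then gives $L^1$-convergence of the density as claimed. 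With these cosmetic adjustments the sketch is correct and coincides with the argument the paper cites.
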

    
    The history of this result is quite remarkable. After a conjecture of Carey--Grundling \cite{MR2098458}, Pestov deduced Theorem \ref{pestov} by reduction to work on stochastic analysis of Malliavin--Malliavin \cite{MR1174157}; however, at the same time Pestov's work \cite[Sections 3, 4]{MR4206535} gives an self-contained proof of a stronger statement from which Theorem \ref{pestov} follows directly. We generalize Pestov's theorem as follows:
    
    \begin{Thm} \label{amenabilityU(A)nccw}
    Let $A$ be a one-dimensional noncommutative CW complex. Then ${\rm U}(A)$ is amenable in the norm topology.
    \end{Thm}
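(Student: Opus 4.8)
The plan is to realise $\UU(A)$ as a closed, co-compact subgroup of an amenable {\rm SIN} group and then apply the Pestov--Schneider Lemma~\ref{cocompact}. Present $A$ as the pullback of $\ev_0\oplus\ev_1\colon C([0,1],F)\to F\oplus F$ and $\gamma=(\alpha,\beta)\colon E\to F\oplus F$ as in Definition~\ref{nccw}. Replacing $A$ by its minimal unitization if necessary --- which is again a one-dimensional noncommutative CW-complex, obtained by replacing $E$ with $E^+$ and $\alpha,\beta$ with their unital extensions to $E^+$ --- we may assume $A$ unital, equivalently $\alpha$ and $\beta$ unital. Then $1_A=(1,1_E)$, and the inclusion $A\subseteq C([0,1],F)\oplus E$ identifies $\UU(A)$ with the subgroup
\[
H:=\{(u,e)\in C([0,1],\UU(F))\times\UU(E)\ :\ u(0)=\alpha(e),\ u(1)=\beta(e)\}
\]
of $G:=\UU\!\bigl(C([0,1],F)\oplus E\bigr)=C([0,1],\UU(F))\times\UU(E)$, equipped with the norm topology. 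The subgroup $H$ is closed, being the preimage of a closed subset of $\UU(F)\times\UU(F)\times\UU(E)$ under the continuous map $(u,e)\mapsto(u(0),u(1),e)$.

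The group $G$ is {\rm SIN}, being the unitary group of a $C^*$-algebra in its norm topology, and it is amenable: writing $F\cong\bigoplus_{j=1}^{k}M_{n_j}$, we have $G\cong\bigl(\prod_{j=1}^{k}C([0,1],\UU(n_j))\bigr)\times\UU(E)$, a finite product of amenable groups --- the factors $C([0,1],\UU(n_j))$ by Pestov's Theorem~\ref{pestov}, and $\UU(E)$ because it is compact --- hence amenable by repeated use of Theorem~\ref{amen}(\ref{amenext}).

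It remains to show $H$ is co-compact in $G$; for this I would exhibit a compact set $K\subseteq G$ with $KH=G$, so that $G/H$, being the continuous image of $K$ under the quotient map, is compact. Take $K:=L\times\UU(E)$ where
\[
L:=\{\,u\in C([0,1],\UU(F))\ :\ u(t)=v\exp(it\theta)\ \text{ for some }v\in\UU(F),\ \theta=\theta^*\in F,\ \norm{\theta}\le\pi\,\}.
\]
The set $L$ is compact, being the image of the compact set $\UU(F)\times\{\theta=\theta^*\in F:\norm{\theta}\le\pi\}$ under the continuous map $(v,\theta)\mapsto\bigl(t\mapsto v\exp(it\theta)\bigr)$, and since every unitary of $F$ equals $\exp(i\theta)$ for some self-adjoint $\theta$ with $\norm{\theta}\le\pi$, the set $L$ contains a path of unitaries joining any ordered pair $v,w\in\UU(F)$. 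Now given $(u,e)\in G$, pick $\ell\in L$ with $\ell(0)=u(0)\alpha(e)^*$ and $\ell(1)=u(1)\beta(e)^*$; then $(\ell,1_E)\in K$, while $(\ell^{-1}u,e)\in H$ because $(\ell^{-1}u)(0)=\alpha(e)\,u(0)^*u(0)=\alpha(e)$ and likewise $(\ell^{-1}u)(1)=\beta(e)$, and $(\ell,1_E)(\ell^{-1}u,e)=(u,e)$. Hence $KH=G$, so $H$ is co-compact, and Lemma~\ref{cocompact} yields that $\UU(A)\cong H$ is amenable in the norm topology.

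The real content is imported: Pestov's Theorem~\ref{pestov} (the amenability of $C([0,1],\UU(n))$, proved via stochastic analysis) together with the Pestov--Schneider passage to co-compact subgroups (Lemma~\ref{cocompact}). Granted these, the only genuinely new point is recognising $\UU(A)$ as a co-compact subgroup of $C([0,1],\UU(F))\times\UU(E)$, and the single step that calls for some care is producing the explicit compact transversal $L$ that makes the co-compactness concrete.
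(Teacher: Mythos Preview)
Your proof is correct and follows the same strategy as the paper's: exhibit $\UU(A)$ as a closed cocompact subgroup of an amenable {\rm SIN} group built from $C([0,1],\UU(F))$, then invoke Theorem~\ref{pestov} and Lemma~\ref{cocompact}. The paper first splits off $\ker\gamma$ to make $\gamma$ injective and then views $\UU(A)$ as a cocompact subgroup of $C([0,1],\UU(F))$ itself (cocompactness being immediate from the compactness of $(\UU(F)\times\UU(F))/\gamma(\UU(E))$), whereas you keep the extra $\UU(E)$ factor and write down an explicit compact transversal $L\times\UU(E)$; this is a cosmetic difference, trading the reduction step for a slightly larger ambient group.
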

    \begin{proof}
    We use the notation of Definition \ref{nccw}. Setting $K=\ker\gamma$ and decomposing $E$ into $E=E'\oplus K$, we see that ${\rm U}(A)={\rm U}(A')\times {\rm U}(K)$ where $A'$ is the pullback along the map $\gamma|_{E'}$. Thus, without loss of generality we can assume $\gamma\colon E\to F\oplus F$ to be injective.
    
    In this latter case, considering the pullback diagram
    \begin{equation*}
    \begin{tikzcd}
    {{\rm U}(A)} & {C([0,1],{\rm U}(F))} \\
    {{\rm U}(E)} & {{\rm U}(F)\times{\rm U}(F)}
    \arrow["{\pi_1}", from=1-1, to=1-2]
    \arrow["{\pi_2}", from=1-1, to=2-1]
    \arrow["\gamma", from=2-1, to=2-2]
    \arrow["{\mathrm{ev}_0\times\mathrm{ev}_1}", from=1-2, to=2-2]
    \end{tikzcd}
    \end{equation*}
    and observing that ${\rm U}(E) \subset {\rm U}(F) \times {\rm U}(F)$ is co-compact, we conclude that ${\rm U}(A)$ is co-compact in $${\rm U}(C([0,1],F)) = \prod_{i=1}^k C([0,1],{\rm U}(n_i)),$$ where $F= \bigoplus_{i=1}^k M_{n_i}(\mathbb C)$. Now, amenability of ${\rm U}(A)$ follows from Lemma \ref{co-compact} and Theorem \ref{pestov}.
    \end{proof}
    
    \subsection{Consequences of the Elliott program}
    
    Relying on Elliott's seminal work providing concrete approximately sub-homogeneous $C^*$-algebras realizing a particular Elliott invariant \cite[Theorem 5.2.3.2]{MR1661611}, Thiel clarified in \cite{Thiel} that in case that $K_0(A)$ is torsion-free, one-dimensional noncommutative CW complexes are sufficient as building blocks -- see also the remarks in the lower part of \cite[page 87]{MR1661611}. 
    
    \begin{Thm}\label{thiel}
    Every nuclear, simple, separable, stably finite, unital, $\mathcal Z$-stable, UCT $C^*$-algebra $A$ such that $K_0(A)$ is torsion-free is an inductive limit of one-dimensional noncommutative CW-complexes.
    \end{Thm}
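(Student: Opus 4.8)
The plan is to combine the uniqueness half of the Elliott classification program for simple, separable, nuclear, $\mathcal Z$-stable $C^*$-algebras satisfying the UCT (see \cite{MR4228503}) with the existence half, namely the range-of-invariant results of Elliott \cite{MR1661611} together with Thiel's refinement \cite{Thiel}. First I would note that $A$ satisfies all the hypotheses of the classification theorem, so that $A$ is determined up to $*$-isomorphism by its Elliott invariant $\mathrm{Ell}(A)=(K_0(A),K_0(A)^+,[1_A]_0,K_1(A),T(A),\rho_A)$. Since $A$ is unital, simple, stably finite and $\mathcal Z$-stable, $\mathrm{Ell}(A)$ is an admissible abstract invariant in the sense of Elliott's existence theorem: $(K_0(A),K_0(A)^+,[1_A]_0)$ is a simple, weakly unperforated ordered abelian group with order unit which is torsionfree by hypothesis, $K_1(A)$ is a countable abelian group, $T(A)$ is a non-empty metrisable Choquet simplex, and $\rho_A$ is a compatible pairing.

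Next I would invoke the range of the invariant. By \cite[Theorem 5.2.3.2]{MR1661611}, combined with Thiel's observation \cite{Thiel} that torsionfreeness of $K_0$ permits the building blocks to be taken one-dimensional, there is a simple, unital inductive limit $B=\varinjlim_n B_n$ of one-dimensional noncommutative CW complexes with $\mathrm{Ell}(B)\cong\mathrm{Ell}(A)$. It then remains to check that $B$ is itself within the scope of the classification theorem. Separability, unitality, nuclearity and the UCT are immediate, since one-dimensional noncommutative CW complexes are subhomogeneous (hence nuclear and satisfy the UCT) and these properties pass to inductive limits; simplicity is built into the choice of connecting maps in the construction; $B$ is stably finite because it is simple with non-empty trace simplex $T(B)\cong T(A)$; and $\mathcal Z$-stability follows from the Toms--Winter theorem, since $\dim_{\mathrm{nuc}}(B)\le\liminf_n\dim_{\mathrm{nuc}}(B_n)<\infty$, a one-dimensional noncommutative CW complex having finite nuclear dimension and nuclear dimension not increasing under inductive limits. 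Alternatively, one could replace $B$ by $B\otimes\mathcal Z$ and use that $\mathcal Z$ is itself an inductive limit of prime dimension-drop algebras, but the nuclear dimension argument is cleaner.

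Finally, since $A$ and $B$ are both within the scope of the classification theorem and $\mathrm{Ell}(A)\cong\mathrm{Ell}(B)$, we obtain $A\cong B$, so that $A$ is an inductive limit of one-dimensional noncommutative CW complexes.

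The main obstacle I anticipate is the careful extraction of the precise range-of-invariant statement: that torsionfreeness of $K_0(A)$ is exactly the hypothesis under which Elliott's approximately subhomogeneous models can be chosen to be inductive limits of one-dimensional noncommutative CW complexes (the content of \cite{Thiel}, relying on \cite{MR1661611}), and that such a model can simultaneously be arranged to be simple and unital with the prescribed invariant. Once the correct model $B$ is available, verifying that it lies in the scope of the classification theorem and concluding is essentially formal bookkeeping.
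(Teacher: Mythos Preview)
Your proposal is correct and follows essentially the same route as the paper: build a model $B$ as an inductive limit of one-dimensional NCCW complexes realising $\mathrm{Ell}(A)$ via Elliott's range-of-invariant theorem together with Thiel's refinement for torsionfree $K_0$, check that $B$ lies in the scope of the classification theorem (the paper simply cites \cite[Corollary D]{MR4228503} and \cite[Corollary 7.11]{Thiel} here, while you spell out the $\mathcal Z$-stability via finite nuclear dimension and Toms--Winter), and conclude $A\cong B$.
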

    
    This result appeared before it was proved in \cite{MR4228503} that separable, unital, simple, nuclear, $\mathcal{Z}$-stable, UCT $C^*$-algebras satisfy the Elliott conjecture; it did not mention the UCT. The above variant follows directly by combining \cite[Corollary D]{MR4228503} with the argument in \cite[Corollary 7.11]{Thiel}, using an easy observation that one-dimensional noncommutative CW-complexes and their limits satisfy the UCT (see \cite[Proposition 2.3]{MR894590}). It is hard to attribute Theorem \ref{thiel} to anyone, as it emerged from various sources -- we certainly claim no originality other than observing it.
    
    \begin{proof}[Proof of Theorem \ref{main}]
    To deduce the first statement from Theorem \ref{amenabilityU(A)nccw}, we need to pass to a direct limit which requires a bit of care. Let $A = \varinjlim A_n$ with canonical homomorphisms $\psi_n\colon A_n\to A$, then $\bigcup_n \UU(\psi_n(A_n))$ is a dense subgroup of $\UU(A)$. Thus, in view of Theorem \ref{amen} (\ref{amenunion})
    it suffices to show that each of the groups $\UU(\psi_n(A_n))$ is amenable. Note that one-dimensional noncommutative CW-complexes have stable rank one. Since $\psi_n(A_n)$ has also stable rank one, its unitary group can be written as an extension
    \[
    1\to \UU_0 (\psi_n(A_n)) \to \UU(\psi_n(A_n)) \to K_1(\psi_n(A_n))\to 1,
    \]
    see \cite[Theorem 2.9]{MR887221}. Since $\UU(A_n)$ is amenable by Theorem \ref{amenabilityU(A)nccw}, so is its open subgroup $\UU_0(A_n)$ using Theorem \ref{amen} (\ref{amenopen}).
    Furthermore, $\UU_0 (\psi_n(A_n)) = \psi_n(\UU_0 (A_n))$ is a quotient of $\UU_0(A_n)$ and therefore also amenable, see Theorem \ref{amen} (\ref{amendense}).
    Altogether, we deduce that $\UU(\psi_n(A_n))$, being an extension of an abelian group by an amenable group, is itself amenable by Theorem \ref{amen} (\ref{amenext}).
    This finishes the proof of the first statement. The second statement follows by applying Theorem \ref{thiel}.
    \end{proof}
    \section{Examples and remarks} \label{exrem}
    
    \subsection{Further directions}
    
    There have been prior attempts to characterize the class of $C^*$-algebras whose unitary group which is amenable in the norm topology, see \cite{MR2269417}. However, the following theorem gives a negative answer to Question $1.1$ from \cite{MR2269417}.
    
    \begin{Thm}
    There exists a stably finite, nuclear, unital $C^*$-algebra, such that ${\rm U}(A)$ is not amenable in the norm topology.
    \end{Thm}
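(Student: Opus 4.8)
The plan is to reduce the statement, by a purely formal argument, to the existence of a single $C^*$-algebra with the required properties, and then to obtain that algebra from a construction of Kirchberg. Running the chain recorded in the introduction in reverse: if ${\rm U}(A)$ were amenable in the norm topology, then $A$ would be strongly amenable by Theorem~\ref{ivimpliesiii}, hence symmetrically amenable (strong amenability trivially implies symmetric amenability), hence, by Ozawa's theorem, nuclear and in possession of the QTS property. Taking contrapositives, it therefore suffices to exhibit a unital, nuclear, \emph{stably finite} $C^*$-algebra $A$ that \emph{fails} the QTS property: for such an $A$ the unitary group ${\rm U}(A)$ cannot be amenable in the norm topology, while stable finiteness, nuclearity and unitality hold by construction.

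Everything thus comes down to the following assertion, which I regard as the substantive point and which rests on work of Kirchberg: there exists a separable, unital, nuclear, stably finite $C^*$-algebra $A$ admitting a non-trivial quotient with no non-trivial trace. A natural way to arrange such an example is via a non-split extension
\[
0 \longrightarrow I \longrightarrow A \longrightarrow B \longrightarrow 0 ,
\]
in which $B$ is a unital purely infinite $C^*$-algebra --- one may take a Cuntz algebra, say $B = \mathcal{O}_{\infty}$ --- and $I$ is a stably finite ideal chosen large enough that the infiniteness present in $B$ does not lift to $A$, so that $A$ itself remains stably finite. Nuclearity of $A$ is then automatic, as $A$ is an extension of nuclear $C^*$-algebras. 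Since a unital purely infinite $C^*$-algebra contains an infinite projection and hence admits no non-trivial trace, $B = A/I$ is a non-trivial traceless quotient of $A$, so $A$ fails the QTS property.

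Granting such an $A$, the theorem follows at once: by the reduction of the first paragraph, ${\rm U}(A)$ is not amenable in the norm topology, while $A$ is stably finite, nuclear and unital. In particular, $A$ is a stably finite, nuclear, unital $C^*$-algebra whose unitary group is not amenable in norm, which contradicts the characterisation proposed in Question~$1.1$ from \cite{MR2269417}.

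The one genuine obstacle is the construction of $A$ itself --- that is, producing a nuclear extension of a purely infinite $C^*$-algebra by a suitable stably finite ideal whose total algebra remains stably finite. This is precisely the input we take from Kirchberg's work; once it is available, the passage to the failure of amenability of ${\rm U}(A)$, via Theorem~\ref{ivimpliesiii}, the implication ``strongly amenable $\Rightarrow$ symmetrically amenable'', and Ozawa's theorem, is entirely formal.
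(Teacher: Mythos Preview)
Your proposal is correct and follows the same strategy as the paper: both invoke Kirchberg's construction of a unital nuclear $C^*$-algebra $A$ sitting in an extension $0\to I\to A\to \mathcal{O}_\infty\to 0$ with $I$ an AF-ideal, observe that $A$ fails the QTS property, and conclude via the chain $(iv)\Rightarrow(iii)\Rightarrow(ii)\Rightarrow(i)$ that ${\rm U}(A)$ is not amenable in norm. The paper is slightly more concrete about stable finiteness --- Kirchberg's $A$ is a unital subalgebra of the CAR-algebra and therefore inherits a faithful tracial state --- whereas your heuristic that ``infiniteness does not lift because $I$ is large enough'' is not by itself an argument; but since you explicitly defer the existence of $A$ to Kirchberg's work, this is not a gap.
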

    \begin{proof}
    Kirchberg \cite[Corollary 1.4 (v)]{MR1322641} provides a unital subalgebra $A$ of the CAR-algebra such that $\mc O_{\infty}$ is a quotient of $A$ by an AF-ideal. Therefore $A$ is an extension of a nuclear algebra by a nuclear ideal and therefore nuclear. Moreover, it clearly admits a faithful tracial state and is thus stably finite, however ${\rm U}(A)$ cannot be amenable in the norm topology since $A$ does not have the QTS property.
    \end{proof}
    
    \begin{Rem}
    The group ${\rm U}(A)$ in the above theorem is another example of a non-amenable closed subgroup of an amenable {\rm SIN} group (in this case the unitary group of the CAR-algebra), a phenomenon first discovered in \cite{MR3795479}. In view of Lemma \ref{co-compact}, this shows that ${\rm U}(A)$ cannot approximated from above by co-compact subgroups.
    
    Another interesting example comes from the fact that the free group $\mb F_2$ can be embedded into the the unitary group of the CAR-algebra as a norm-discrete subgroup. Indeed, by the work of Choi \cite{MR540914} the reduced $C^*$-algebra $C^*_r(\PSL(2,\mb Z))$ (and hence $C^*_r(\mb F_2)$) embeds into the Cuntz algebra $\mc O_2$ -- a precursor of Kirchberg's seminal $\mc O_2$-embedding theorem, see \cite{MR1780426}. The Cuntz algebra is a subquotient of the CAR-algebra by a result of Blackadar \cite{MR808296}. Since $\UU(\mc O_2)$ is connected, we can lift the standard unitaries of $C^*_r(\mb F_2)\subset \mc O_2$ into the unitary group of the CAR-algebra, which gives a discrete embedding of $\mb F_2$ into it.
    \end{Rem}
    
    As mentioned already, while the algebra $\mc O_{n}$ is not strongly amenable, it has been observed by Rosenberg \cite{MR467331} that $(\mc O_{n} \otimes \mathcal K)^+$ is strongly amenable; we do not know whether its unitary group is amenable in norm. Even though $(\mc O_{n} \otimes \mathcal K)^+$ is not a limit of noncommutative CW-complexes by results of Cuntz \cite[Section 2.3]{MR467330} and arguments in \cite[page 190]{MR467331}, we think that our strategy to employ the classification program in order to approximate algebras should be extended to cover interesting non-simple algebras. The exact characterization of non-simple algebras that arise as inductive limits of one-dimensional noncommutative CW-complexes is still inconclusive, but important partial results have been obtained. For example, in \cite{MR2578465} Gong et al.\ proved that every ${\rm AH}$-algebra with the ideal property (every ideal is generated by projections) and torsion-free $K$-theory, is an ${\rm AT}$-algebra.
    Inductive limits of  generalized dimension
    drop interval algebras appeared in the study of algebras with real rank zero \cite{MR4400699}. Inductive limits of more general one-dimensional noncommutative CW-complexes featured already in work of Kirchberg and R\o rdam \cite{MR2153904}. Let us also mention the more classical study of ${\rm AI}$-algebras by Thomsen \cite{MR1162672}.
    
    Going into another direction, Elliot's result combined with the recent progress in the classification program shows that all nuclear, simple, separable, stably finite, $\mathcal Z$-stable, UCT algebras are limits of two-dimensional noncommutative CW-complexes. Thus, it seems possible that our main result can be proved without the restriction on $K_0.$
    The basic additional building block here is the algebra $C([0,1]^2,M_n(\mathbb C)).$ It is subject of further work to prove that its unitary group is amenable in the norm topology, which again seems more a problem of stochastic analysis and concentration of measure techniques. However, even if this can be done, there are additional difficulties, since the proof of our main result uses co-compactness at one point which is intrinsic to the one-dimensional situation. It seems unclear at present if this can be rescued so that everything can be reduced to the study of $C([0,1]^2,M_n(\mathbb C)).$
    
    Note that Conjecture \ref{conj1} in particular asserts that for every discrete amenable group $\Gamma$ the unitary group of $C^*(\Gamma)$ is amenable in norm topology. Indeed, in this case $C^*(\Gamma)$ is well-known to be nuclear and easily seen to have the QTS property because an arbitrary quotient of is generated by the image of $\Gamma$ over which one can average. Indeed, every representation of an amenable group is amenable in the sense of Bekka -- and thus admits a trace on the generated $C^*$-algebra even though the generated von Neumann algebra might be infinite. The conjecture seems to be open for the group $\Gamma=\mathbb Z^2 \times {\rm Sym}(3)$.
    
    \subsection{Group of connected components of $\UU(A)$}
    A consequence of Conjecture \ref{conj1} would be that ${\rm U}(A)/{\rm U}_0(A)$ is amenable as a discrete group for every nuclear $C^*$-algebra $A$ with the QTS property. Even though many natural examples of $C^*$-algebras appearing in the classification program satisfy ${\rm U}(A)/{\rm U}_0(A) \cong K_1(A)$ in which case the claim is obvious since $K_1(A)$ is abelian, this consequence is far from being trivial. 
    
    Indeed, it may well happen that ${\rm U}(A)/{\rm U}_0(A)$ is non-abelian. A natural example for this phenomenon is the algebra $A=M_2(C({\rm U}(2)\times {\rm U}(2)))$, using the classical fact that the commutator map $c \colon {\rm U}(2) \times {\rm U}(2) \to {\rm U}(2)$ is not homotopically trivial. Now, if $X$ is a finite cell complex, then it follows from work of Hopkins \cite{MR1017164} that ${\rm U}(M_nC(X))/{\rm U}_0(M_nC(X))$ is a nilpotent group, see also \cite[page 464]{MR516508}. In particular, we see that also in this case, the group is amenable.
    
    \begin{Conj}
    Let $A$ be a nuclear $C^*$-algebra. Then ${\rm U}(A)/{\rm U}_0(A)$ is amenable.
    \end{Conj}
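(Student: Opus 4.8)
The plan is to combine the structural input of the Elliott classification programme, as used for Theorem~\ref{main}, with classical homotopy-theoretic descriptions of the component group $\UU(A)/\UU_0(A)$, propagating amenability along the resulting decompositions by Theorem~\ref{amen} (each of whose items applies to a discrete group, where every subgroup is closed).

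First I would reduce to $A$ separable and unital. Amenability of a discrete group is a local property, so it is enough to show that every finitely generated subgroup of $\UU(A)/\UU_0(A)$ is amenable; lifting a finite generating set to unitaries $u_1,\dots,u_k\in\UU(A)$ and invoking the standard argument producing a separable nuclear $C^*$-subalgebra from the completely positive approximation property, one finds such a $B\subseteq A$ with $u_1,\dots,u_k\in B$, and then the subgroup of $\UU(A)/\UU_0(A)$ generated by the classes $[u_j]$ is a quotient of $\UU(B)/\UU_0(B)$. By Theorem~\ref{amen}\,\ref{amendense} we may thus assume $A$ separable nuclear, and by passing to the minimal unitisation $\widetilde A$ (again separable nuclear, with $\UU(\widetilde A)\cong\mathbb T\times\UU(A)$ and hence $\UU(\widetilde A)/\UU_0(\widetilde A)\cong\UU(A)/\UU_0(A)$) we may also assume $A$ unital.

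Next I would dispose of the two classes of simple algebras for which the component group is transparent. If $A$ is purely infinite, then $A$ is properly infinite and the canonical map $\UU(A)/\UU_0(A)\to K_1(A)$ is an isomorphism, so the group is abelian and therefore amenable. If $A$ is stably finite and, moreover, $\mathcal Z$-stable and satisfies the UCT -- with $K_0$ torsionfree, or in general at the cost of two-dimensional building blocks as in Section~\ref{exrem} -- then Theorem~\ref{thiel} presents $A=\varinjlim B_i$ as an inductive limit of noncommutative CW-complexes, and for each such block the pullback description of $\UU(B_i)$, the connectedness of unitary groups of finite-dimensional $C^*$-algebras, and Whitehead's analysis of $\UU(M_n(C(X)))$ in~\cite{MR516508} show that $\UU(B_i)/\UU_0(B_i)$ is nilpotent (finitely generated abelian when $\dim B_i=1$), hence amenable. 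Arguing exactly as in the proof of Theorem~\ref{main}, $\UU(A)/\UU_0(A)$ is then the directed union of the images of the quotient groups $\UU(\psi_i(B_i))/\UU_0(\psi_i(B_i))$, each of which is amenable, so $\UU(A)/\UU_0(A)$ is amenable by Theorem~\ref{amen}\,\ref{amendense} and~\ref{amenunion}. The same reasoning covers all AH-algebras, in particular the non-classifiable simple ones arising from Villadsen-type constructions, since these remain inductive limits of algebras $M_n(C(X))$.

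For non-simple $A$ I would attempt a d\'evissage along an ideal $I\trianglelefteq A$: from $0\to I\to A\to A/I\to 0$ one obtains an exact sequence relating $\UU(A)/\UU_0(A)$ to the component groups of $1+I$ and of $A/I$, so that amenability would propagate through a (possibly transfinite) composition series by Theorem~\ref{amen}\,\ref{amenext} and~\ref{amenunion}. The hard part will be precisely the part of the nuclear world untouched by classification: it is not known whether every simple, separable, nuclear, stably finite $C^*$-algebra is an ASH-algebra, and a general separable nuclear algebra need not admit a composition series with simple or type~I subquotients, so the reduction to the cases above is unavailable in general. A complete resolution therefore seems to require either homotopy-theoretic control of $\UU(A)/\UU_0(A)$ extracted directly from the completely positive approximation property -- and here one should note that the weak-topology amenability of $\UU(A)$ from Paterson's theorem~\cite{MR1076577} is of no help, since $\UU_0(A)$ is typically not weakly closed (already for $A=C(S^1)$) and $\UU(A)$ as a discrete group may contain non-abelian free subgroups even when $A$ is AF -- or substantial new structural results for exotic nuclear algebras.
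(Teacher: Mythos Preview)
The statement you are addressing is recorded in the paper as a \emph{conjecture}; the paper does not claim to prove it and supplies no proof. What the paper does offer is the observation that Conjecture~\ref{conj1} would imply it for nuclear algebras with the QTS property, together with the remark that for $A=M_n(C(X))$ with $X$ a finite complex, Whitehead's result makes $\UU(A)/\UU_0(A)$ a split extension of a nilpotent group by $H^1(X,\mathbb Z)$, hence amenable. There is thus no ``paper's own proof'' to compare against.

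Your proposal is, correspondingly, not a proof either, and to your credit you say so explicitly in the final paragraph. The partial steps you outline are largely sound: the reduction to separable unital $A$ works (with the minor imprecision that the finitely generated subgroup in question is a quotient of a \emph{subgroup} of $\UU(B)/\UU_0(B)$, which is harmless for amenability); for simple purely infinite $A$ the isomorphism $\UU(A)/\UU_0(A)\cong K_1(A)$ is indeed known; and for the classifiable stably finite case your inductive-limit argument goes through, though it is simpler than you indicate since one-dimensional NCCW complexes have stable rank one and hence $\UU(B_i)/\UU_0(B_i)\cong K_1(B_i)$ is already abelian --- no appeal to Whitehead is needed there. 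The genuine obstruction you name at the end --- the absence of any building-block description for general separable nuclear algebras, and the failure of composition-series reductions to reach tractable subquotients --- is precisely why the paper leaves the statement as a conjecture; your proposal does not close this gap, and was not expected to.
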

    
    Even though there is a considerable amount of literature on non-stable $K$-theory, to the best of our knowledge, the range of this invariant has not been considered so far.
    
    \subsection{Skew-amenability of $\UU(A)$ in the weak topology} An interesting part of Conjecture \ref{conj1} is the possible implication $(v)\Rightarrow{}(i)$, i.e. skew-amenability of $\UU(A)$ in the weak topology implies that $A$ is nuclear and has the QTS property. This is related to several other notions which we will now briefly discuss.
    
    The key observation here is that every strongly continuous unitary representation $\pi\colon G\to B(H)$ of a skew-amenable group admits a hypertrace, i.e.~a state $\varphi\colon B(H)\to\mb C$ which is $\pi(G)$-invariant; in particular, this gives a trace on the $C^*$-algebra generated by $\pi(G)$ \cite[Theorem 1.3]{PS23}.
    
    This shows that if $\UU(A)$ is skew-amenable, then $A$ is not only QTS, but actually every representation of $A$ has a hypertrace; in other words, $A$ is a hypertracial $C^*$-algebra as defined by B\'edos in \cite{MR1373325}. The results there also show that $(i)$ implies hypertraciality while every hypertracial $C^*$-algebra obviously has the QTS property. B\'edos asked if hypertraciality implies nuclearity; however, counterexamples (simple, AF-embeddable, tracial non-nuclear algebras) were constructed by Dadarlat in \cite{MR1759889} and some more counterexamples were constructed by Brown \cite[Remark 6.2.6]{MR2263412}. 
    
    On the other hand, skew-amenability of $\UU(A)$ in the weak topology actually implies more than hypertraciality. Using the terminology from \cite{MR2263412}, while hypertraciality only means that every quotient has a trace which is amenable, the next result shows that skew-amenability of $\UU(A)$ implies that all such traces are uniformly amenable \cite[Theorem 3.2.2]{MR2263412}.
    
    \begin{Prop}
    Let $A$ be a $C^*$-algebra such that $\UU(A)$ is skew-amenable in the weak topology, $\tau\colon A\to \mb C$ be a trace and $\pi\colon A\to B(L^2(A,\tau))$ the corresponding GNS representation. Then the von Neumann algebra $M = \pi(A)''$ is hyperfinite.
    \end{Prop}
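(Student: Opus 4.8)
The plan is to deduce from skew-amenability of $\UU(A)$ that $\tau$ is an \emph{amenable trace} on $A$ in the sense of B\'edos and Brown \cite{MR1373325, MR2263412} --- that is, that there is a state $\varphi$ on $B(L^2(A,\tau))$ with $\varphi\circ\pi=\tau$ and $\varphi(\pi(a)T)=\varphi(T\pi(a))$ for all $a\in A$ and $T\in B(L^2(A,\tau))$ --- and then to invoke the characterization of amenable traces to conclude that $M$ is injective, hence hyperfinite by Connes \cite{MR0493383}. Replacing $A$ by $A/\ker\tau$, which changes neither $M$ nor the hypothesis, we may assume $\tau$ faithful; then $\pi$ is faithful, $A\subseteq M=\pi(A)''$ is weakly dense, and the cyclic vector $\hat 1\in L^2(A,\tau)$ is separating and implements the faithful normal trace $\hat\tau$ on $M$ extending $\tau$. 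We also take $A$ unital.

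The first step is that $\pi$ restricts to a strongly continuous unitary representation of the group $\UU(A)$, endowed with the weak topology, on $L^2(A,\tau)$; being a homomorphism, it is enough to check continuity at $1$. If $u_i\to 1$ weakly then $u_i^*\to 1$ weakly as well, and applying the bounded functional $x\mapsto\tau(a^*xa)$ to $u_i\to 1$ and to $u_i^*\to 1$ gives $\tau(a^*u_ia),\,\tau(a^*u_i^*a)\to\tau(a^*a)$ for every $a\in A$, so that
\[
\norm{\pi(u_i)\hat a-\hat a}_{2}^{2}=2\tau(a^*a)-\tau(a^*u_ia)-\tau(a^*u_i^*a)\longrightarrow 0 .
\]
Since $\hat A$ is dense in $L^2(A,\tau)$ and $\UU(A)$ is uniformly bounded, it follows that $\pi(u_i)\to 1$ strongly.

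Now the Pestov--Schneider theorem \cite[Theorem 1.3]{PS23} applies and produces a state $\varphi$ on $B(L^2(A,\tau))$ invariant under $\Ad(\pi(u))$ for every $u\in\UU(A)$, i.e.\ $\varphi(\pi(u)T\pi(u)^*)=\varphi(T)$. Moreover $\varphi$ can be arranged to restrict to $\tau$ on $\pi(A)$: the hypertrace in \cite{PS23} is built by averaging a chosen normal state over the $\Ad(\pi(\UU(A)))$-orbit using the right-invariant mean on $\RUCB(\UU(A))$ furnished by skew-amenability, and if one starts from the vector state $\omega_{\hat 1}(\,\cdot\,)=\langle\,\cdot\,\hat 1,\hat 1\rangle$, which satisfies $\omega_{\hat 1}\circ\pi=\tau$, then the average still restricts to $\tau$ on $\pi(A)$ because $\tau(uau^*)=\tau(a)$ for all $u\in\UU(A)$ --- the relevant orbit functions being right-uniformly continuous by the strong continuity above, again by traciality. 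Taking $T\pi(u)$ in place of $T$ in the invariance relation gives $\varphi(\pi(u)T)=\varphi(T\pi(u))$, hence $\varphi(\pi(a)T)=\varphi(T\pi(a))$ for all $a\in A$ since $A$ is the linear span of its unitaries. Thus $\tau$ is an amenable trace on $A$ --- the weak topology in fact forces the stronger, uniform version, cf.\ \cite[Theorem 3.2.2]{MR2263412} --- and by Brown's characterization of (uniformly) amenable traces \cite[Theorem 3.2.2]{MR2263412} the von Neumann algebra $M=\pi(A)''$ is injective, hence hyperfinite by \cite{MR0493383} (with separable predual when $A$ is separable).

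The real weight of the argument lies in this last step, namely the passage from the $C^*$-algebra $\pi(A)$ to its weak closure $M$. The hypertrace handed over by skew-amenability is, a priori, $\Ad$-invariant only for the unitaries of $A$, and it cannot be promoted to all of $\UU(M)$ by a naive strong-operator limit, since $\varphi$ is far from normal; what rescues the argument is precisely that $\varphi$ can be made to extend the prescribed trace $\tau$, which is exactly the input under which Brown's machinery --- ultimately Connes' theorem --- turns $C^*$-algebraic approximate invariance into injectivity of $M$. By comparison, the genuinely new ingredients here, the weak continuity of the GNS representation and the choice of the right hypertrace, are elementary; one should however take care to confirm that $\UU(A)$ with the weak topology really is a topological group to which \cite[Theorem 1.3]{PS23} applies, and, if one wants to dispense with separability hypotheses on $A$, state the conclusion as injectivity (equivalently, semidiscreteness) of $M$.
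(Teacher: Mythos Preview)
Your argument correctly establishes that $\tau$ is an \emph{amenable} trace: via the Pestov--Schneider construction you produce a state $\varphi$ on $B(L^2(A,\tau))$ that is $\pi(A)$-central and restricts to $\tau$. But this alone does not give hyperfiniteness of $M=\pi_\tau(A)''$. In \cite[Theorem~3.2.2]{MR2263412} it is the \emph{uniformly} amenable traces that are characterised by hyperfiniteness of the GNS closure; mere amenability of $\tau$ is strictly weaker and only yields a trace-preserving embedding of $M$ into $R^\omega$. A concrete witness to the gap is $A=C^*_r(\mb F_2)$ with its canonical trace: this trace is amenable (since $\mb F_2$ is residually finite, hence hyperlinear), yet $\pi_\tau(A)''=L(\mb F_2)$ is not hyperfinite. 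Your parenthetical ``the weak topology in fact forces the stronger, uniform version'' is therefore not a remark but the entire content of the proposition, and nothing in your construction of $\varphi$ uses the weak topology in a way that would distinguish it from the norm case. Your closing paragraph diagnoses the difficulty accurately --- the hypertrace is $\pi(A)$-central, not $M$-central, and cannot be upgraded by strong limits --- but the appeal to ``Brown's machinery'' does not resolve it: Brown's input for injectivity is uniform amenability, which you have not established.

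The paper's proof circumvents this by working on the group side rather than the state side. It pushes skew-amenability forward along the continuous dense-image homomorphism $\pi\colon \UU(A)_{\mathrm{weak}}\to \UU(M)_{\mathrm{SOT}}$, using \cite[Lemma~6.2(2)]{MR4368352} to obtain skew-amenability of $\pi(\UU(A))$ in the subspace topology inherited from $\UU(M)$. Since $M$ is a finite von Neumann algebra, $\UU(M)$ is SIN, so skew-amenability of the dense subgroup upgrades to amenability, which then passes to $\UU(M)$ itself; hyperfiniteness of $M$ follows from \cite{MR1076577}. The essential point is that the SIN upgrade from skew-amenability to amenability takes place \emph{after} passage to $\UU(M)$ --- this is precisely the promotion from $A$ to its weak closure that your hypertrace argument cannot perform at the level of states on $B(L^2(A,\tau))$.
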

    \begin{proof}
    Let $H\coloneqq \UU(A)$ with the weak topology, $G\coloneqq \UU(M)$ with the strong operator topology; the representation $\pi$ restricts to a continuous homomorphism $\pi\colon H\to G$ with dense image $\pi(H)\leqslant G$. By \cite[Lemma 6.2(2)]{MR4368352}, $\pi(H)$ with the subgroup topology of $G$ is skew-amenable. On the other hand, as $M$ is a finite von Neumann algebra, $G$ is a SIN group. Therefore we conclude that $\pi(H)$, being skew-amenable and SIN, is in fact amenable. Hence, also $G=\UU(M)$ is amenable by Theorem \ref{amen} (\ref{amendense}). However, this implies that $M$ is hyperfinite \cite[Theorem 1]{MR1076577}.
    \end{proof}
    
    It is an interesting open problem whether every skew-amenable group is actually amenable \cite{PS23}; this would in particular give the implication $(v) \Rightarrow (i)$, since $(v)$ implies the QTS property and amenability in the weak topology implies nuclearity by \cite{MR1076577}. Thus, counterexamples to B\'edos' question  might well be examples of $C^*$-algebras with skew-amenable but not amenable unitary group in the weak topology. On the other hand, some of these counterexamples  generate non-hyperfinite von Neumann algebras \cite[Remark 6.2.6]{MR2263412} and therefore do not have skew-amenable unitary groups in view of the above.
    
    \section*{Acknowledgments}
    We thank Martin Schneider, Wilhelm Winter, Hannes Thiel, and Narutaka Ozawa for interesting remarks. We also thank the unknown referee for numerous useful comments. The authors acknowledge funding by the Deutsche Forschungsgemeinschaft (SPP 2026).
    
    Please note also Ozawa's contribution {\it Amenability of unitary groups of simple monotracial $C^*$-algebras} in the same volume, where Conjecture \ref{conj1} is partially confirmed and partially refuted. In particular, it contains first examples of non-nuclear $C^*$-algebras whose unitary groups are skew-amenable (but not amenable!) in the weak topology. 
    
    \bibliographystyle{acm}

    \end{document}